\newtheorem{theorem}{Theorem}[section]
\newtheorem{lemma}[theorem]{Lemma}
\newtheorem{proposition}[theorem]{Proposition}
\newtheorem{corollary}[theorem]{Corollary}
\theoremstyle{definition}
\theoremstyle{remark}
\newtheorem{remark}[theorem]{Remark}
\numberwithin{equation}{section}
\begin{document}
\setcounter{page}{1}

\title[$n$-dual spaces associated to a normed space]{$n$-dual spaces
associated to a normed space}
\author{Yosafat E. P. Pangalela}
\address{Department of Mathematics and Statistics, University of Otago, PO
Box 56, Dunedin 9054, New Zealand}
\email{yosafat.pangalela@maths.otago.ac.nz}
\subjclass[2010]{Primary 46B20; Secondary 46C05, 46C15, 46B99, 46C99.}
\keywords{$n$-dual spaces, $n$-normed spaces, bounded linear functionals.}
\date{Received: August 12, 2015.}

\begin{abstract}
For a real normed space $X$, we study the $n$-dual space of $\left(
X,\left\Vert \cdot \right\Vert \right) $ and show that the space is a Banach
space. Meanwhile, for a real normed space $X$ of dimension $d\geq n$ which
satisfies property (G), we discuss the $n$-dual space of $\left(
X,\left\Vert \cdot ,\ldots ,\cdot \right\Vert _{G}\right) $, where $%
\left\Vert \cdot ,\ldots ,\cdot \right\Vert _{G}$ is the G\"{a}hler $n$%
-norm. We then investigate the relationship between the $n$-dual space of $%
\left( X,\left\Vert \cdot \right\Vert \right) $ and the $n$-dual space of $%
\left( X,\left\Vert \cdot ,\ldots ,\cdot \right\Vert _{G}\right) $. We use
this relationship to determine the $n$-dual space of $\left( X,\left\Vert
\cdot ,\ldots ,\cdot \right\Vert _{G}\right) ~$and show that the space is
also a Banach space.
\end{abstract}

\maketitle

\section{Introduction}

In the 1960's, the notion of $n$-normed spaces was introduced by G\"{a}hler
\cite{G64, Ga1, Ga2, Ga3} as a generalisation of normed spaces. For every
real normed space $X$ of dimension $d\geq n$, G\"{a}hler showed that $X$ can
be viewed as an $n$-normed space by using the G\"{a}hler $n$-norm, which is
denoted by $\left\Vert \cdot ,\ldots ,\cdot \right\Vert _{G}$. This $n$-norm
is defined by using the set of bounded linear functionals on $X$. Since
then, many researchers have studied operators and functionals on $n$-normed
space $X$ (see \cite{BGP13,GGN10, LD82, L04, MC12, PG13, S12, W69}).

In \cite{PG13}, the author and Gunawan introduced the concept of $n$-dual
spaces. For every real normed space $X$ of dimension $d \geq n$, there are
two $n$-dual spaces associated to $X$. The first is the $n$-dual space of $%
\left( X,\left\Vert \cdot \right\Vert \right) $, and the other is the $n$%
-dual space of $\left( X,\left\Vert \cdot ,\ldots ,\cdot \right\Vert
_{G}\right) $. In case $X$ is the $l^{p}$ space for some $1\leq p<\infty $,
the author and Gunawan have investigated and given the relationship between
both $n$-dual spaces \cite{PG13}. Here we provide an analogues result on
more general normed spaces.

For a real normed space $X$, we investigate the $n$-dual space of $\left(
X,\left\Vert \cdot \right\Vert \right) $ by using the $\left( n-1\right) $%
-dual space of $\left( X,\left\Vert \cdot \right\Vert \right) $ (Theorem \ref%
{n-dual-space-norm}). We then focus on a real normed space $X$ of dimension $%
d\geq n$ which satisfies property (G) and discuss the relationship between
the $n$-dual space of $\left( X,\left\Vert \cdot \right\Vert \right) \,$and
the $n$-dual space of $\left( X,\left\Vert \cdot ,\ldots ,\cdot \right\Vert
_{G}\right) $ (Theorem \ref{n-dual-space-n-norm}). It is interesting to
observe that both the $n$-dual space of $\left( X,\left\Vert \cdot
\right\Vert \right) $ and the $n$-dual space of $\left( X,\left\Vert \cdot
,\ldots ,\cdot \right\Vert _{G}\right) $ are Banach spaces (Theorem \ref%
{banach-space-n-dual-space} and Theorem \ref%
{banach-space-n-dual-space-Gahler}).

\section{Preliminaries}

\label{preliminaries}

Let $n$ be a nonnegative integer and $X$ a real vector space of dimension $%
d\geq n$. We call a real-valued function $\left\Vert \cdot ,\ldots ,\cdot
\right\Vert $ on $X^{n}$ an $n$\emph{-norm }on $X$ if for all $x_{1},\ldots
,x_{n},x^{\prime }\in X$, we have

\begin{enumerate}
\item[(1)] $\left\Vert x_{1},\ldots ,x_{n}\right\Vert =0$ if and only if $%
x_{1},\ldots ,x_{n}$ are linearly dependent;

\item[(2)] $\left\Vert x_{1},\ldots ,x_{n}\right\Vert $ is invariant under
permutation;

\item[(3)] $\left\Vert \alpha x_{1},x_{2},\ldots ,x_{n}\right\Vert
=\left\vert \alpha \right\vert \left\Vert x_{1},x_{2},\ldots
,x_{n}\right\Vert $ for all $\alpha \in \mathbb{R}$; and

\item[(4)] $\left\Vert x_{1}+x^{\prime },\ldots ,x_{n}\right\Vert \leq
\left\Vert x_{1},\ldots ,x_{n}\right\Vert +\left\Vert x^{\prime },\ldots
,x_{n}\right\Vert $.
\end{enumerate}

We then call the pair $\left( X,\left\Vert \cdot ,\ldots ,\cdot \right\Vert
\right) $ an $n$\emph{-normed space}$.$

An example of $n$-normed spaces is the $l^{p}$ space, where $1\leq p<\infty $%
, equipped with
\begin{equation*}
\left\Vert x_{1},\ldots ,x_{n}\right\Vert _{p}:=\big(\frac{1}{n!}%
\sum_{j_{1}}\cdots \sum_{j_{n}}\left\vert \det \left( x_{ij_{k}}\right)
\right\vert ^{p}\big)^{\frac{1}{p}}
\end{equation*}%
for $x_{1},\ldots ,x_{n}\in l^{p}$ (see \cite[Section 3]{G01}).

Another interesting example of $n$-normed spaces is the G\"{a}hler $n$-norm
which was introduced in \cite{Ga1, Ga2, Ga3}. Let $X$ be a real normed space
of dimension $d \geq n$, and $X^{\left( 1\right) }$ the dual space of $X$. G%
\"{a}hler showed that the function $\left\Vert \cdot ,\ldots ,\cdot
\right\Vert _{G}$ which is given by%
\begin{equation*}
\left\Vert x_{1},\ldots ,x_{n}\right\Vert _{G}:=\sup_{\substack{ f_{i}\in
X^{\left( 1\right) },\left\Vert f_{i}\right\Vert \leq 1  \\ 1\leq i\leq n}}%
\left\vert \det \left[ f_{j}\left( x_{i}\right) \right] _{i,j}\right\vert
\end{equation*}%
for all $x_{1},\ldots ,x_{n}\in X$, is an $n$-norm on $X$. Hence every real
normed space $X$ can be viewed as an $n$-normed space $\left( X,\left\Vert
\cdot ,\ldots ,\cdot \right\Vert _{G}\right) $.

Let $X$ be a real normed space of dimension $d\geq n$. Any real-valued
function $f$ on $X^{n}$ is called an $n$\emph{-functional} on $X$. An $n$%
-functional $f$ is \emph{multilinear} if it satisfies two following
properties:

\begin{enumerate}
\item[(1)] $f\left( x_{1}+y_{1},\ldots ,x_{n}+y_{n}\right) =\sum_{z_{i}\in
\left\{ x_{i},y_{i}\right\} ,1\leq i\leq n}f\left( z_{1},\ldots
,z_{n}\right) $ and

\item[(2)] $f\left( \alpha _{1}x_{1},\ldots ,\alpha _{n}x_{n}\right) =\alpha
_{1}\cdots \alpha _{n-1}f\left( x_{1},\ldots ,x_{n}\right) $
\end{enumerate}

for all $x_{1},\ldots ,x_{n},y_{1},\ldots ,y_{n}\in X$ and $\alpha
_{1},\ldots ,\alpha _{n}\in \mathbb{R}$.

For multilinear $n$-functionals $f,h$ on $X$, we define an $n$-functional $%
f+h$ by
\begin{equation*}
\left( f+h\right) \left( x_{1},\ldots ,x_{n}\right) :=f\left( x_{1},\ldots
,x_{n}\right) +h\left( x_{1},\ldots ,x_{n}\right)
\end{equation*}%
for $x_{1},\ldots ,x_{n}\in X$. Then $f+h$ is also multilinear. On the other
hand, we say $f=h$ if%
\begin{equation*}
f\left( x_{1},\ldots ,x_{n}\right) =h\left( x_{1},\ldots ,x_{n}\right)
\end{equation*}%
for $x_{1},\ldots ,x_{n}\in X$.

We call an $n$-functional $f$ \emph{bounded }on a real normed space $\left(
X,\left\Vert \cdot \right\Vert \right) $ [respectively an $n$-normed space $%
\left( X,\left\Vert \cdot ,\ldots ,\cdot \right\Vert \right) $] if there
exists a constant $K>0$ such that%
\begin{equation*}
\left\vert f\left( x_{1},\ldots ,x_{n}\right) \right\vert \leq K\left\Vert
x_{1}\right\Vert \cdots \left\Vert x_{n}\right\Vert \text{ [resp. }%
\left\vert f\left( x_{1},\ldots ,x_{n}\right) \right\vert \leq K\left\Vert
x_{1},\ldots ,x_{n}\right\Vert \text{]}
\end{equation*}%
for all $x_{1},\ldots ,x_{n}\in X$.

Let $S_{n}$ denotes the group of permutations of $\left( 1,\ldots ,n\right) $%
. Recall from \cite{PG13} that every bounded multilinear $n$-functional $f$
on $\left( X,\left\Vert \cdot ,\ldots ,\cdot \right\Vert \right) $ is \emph{%
antisymmetric} in the sense that%
\begin{equation*}
f\left( x_{1},\ldots ,x_{n}\right) =\operatorname{sgn}\left( \sigma \right) f\left(
x_{\sigma \left( 1\right) },\ldots ,x_{\sigma \left( n\right) }\right)
\end{equation*}%
for $x_{1},\ldots ,x_{n}\in X$ and $\sigma \in S_{n}$. Here $\operatorname{sgn}%
\left( \sigma \right) =1$ if $\sigma $ is an even permutation, and $\operatorname{sgn%
}\left( \sigma \right) =-1$ if $\sigma $ is an odd permutation. Note that if
$f$ is antisymmetric then for any linearly dependent $x_{1},\ldots ,x_{n}\in
X$, we have $f\left( x_{1},\ldots ,x_{n}\right) =0$.

\begin{remark}
In general, we do not have the antisymmetric property for bounded
multilinear $n$-functionals on $\left( X,\left\Vert \cdot \right\Vert
\right) $.
\end{remark}

The space of bounded multilinear $n$-functionals on $\left( X,\left\Vert
\cdot \right\Vert \right) $ is called the $n$\emph{-dual space} of $\left(
X,\left\Vert \cdot \right\Vert \right) $ and denoted by $X^{\left( n\right)
} $. For $n=0$, we define $X^{\left( 0\right) }$ as $\mathbb{R}$. \ The
function $\left\Vert \cdot \right\Vert _{n,1}$ on $X^{\left( n\right) }$
where
\begin{equation*}
\left\Vert f\right\Vert _{n,1}:=\sup_{x_{1},\ldots ,x_{n}\neq 0}\frac{%
\left\vert f\left( x_{1},\ldots ,x_{n}\right) \right\vert }{\left\Vert
x_{1}\right\Vert \cdots \left\Vert x_{n}\right\Vert }
\end{equation*}%
for $f\in X^{\left( n\right) }$, defines a norm on $X^{\left( n\right) }$
and then $X^{\left( n\right) }$ is a normed space.

Meanwhile, the $n$\emph{-dual space} of $\left( X,\left\Vert \cdot ,\ldots
,\cdot \right\Vert \right) $ is the space of bounded multilinear $n$%
-functional on $\left( X,\left\Vert \cdot ,\ldots ,\cdot \right\Vert \right)
$. This space is also a normed space with the following norm%
\begin{equation*}
\left\Vert f\right\Vert _{n,n}:=\sup_{\left\Vert x_{1},\ldots
,x_{n}\right\Vert \neq 0}\frac{\left\vert f\left( x_{1},\ldots ,x_{n}\right)
\right\vert }{\left\Vert x_{1},\ldots ,x_{n}\right\Vert }\text{.}
\end{equation*}

Now let $X,Y$ be real normed spaces. We write $B\left( X,Y\right) $ to
denote the set of bounded linear operators from $X$ into $Y$. The function $%
\left\Vert \cdot \right\Vert _{\operatorname{op}}$ where%
\begin{equation*}
\left\Vert u\right\Vert _{\operatorname{op}}:=\sup_{x\neq 0}\frac{\left\Vert u\left(
x\right) \right\Vert }{\left\Vert x\right\Vert }
\end{equation*}%
for every $u\in $ $B\left( X,Y\right) \,$, is a norm on $B\left( X,Y\right) $%
. For simplification, we write $B\left( X,Y\right) $ to denote the normed
space $B\left( X,Y\right) $ equipped with $\left\Vert \cdot \right\Vert _{%
\operatorname{op}}$. Otherwise, if $\left\Vert \cdot \right\Vert ^{\ast }$ is a norm
function on $B\left( X,Y\right) $, we write $\left( B\left( X,Y\right)
,\left\Vert \cdot \right\Vert ^{\ast }\right) $ to denote the normed space $%
B\left( X,Y\right) $ equipped by the norm $\left\Vert \cdot \right\Vert
^{\ast }$.

\section{The $n$-dual space of $(X,\left\Vert \cdot \right\Vert )$}

\label{dual-space-norm}

In this section, we first identify the bounded multilinear $n$-functionals
on $(X,\left\Vert \cdot \right\Vert )$ (Proposition Proposition \ref%
{n-functional}). We then identify the $n$-dual space of $(X,\left\Vert \cdot
\right\Vert )$ by using the $\left( n-1\right) $-dual space of $%
(X,\left\Vert \cdot \right\Vert )$ (Theorem \ref{n-dual-space-norm}).
Finally we show that the $n$-dual space of $(X,\left\Vert \cdot \right\Vert
) $ is a Banach space (Theorem \ref{banach-space-n-dual-space}).

\begin{proposition}
\label{n-functional}Let $X$ be a real normed space of dimension $d\geq n$
and $f$ a bounded multilinear $n$-functional on $(X,\left\Vert \cdot
\right\Vert )$. Then there exists $u_{f}\in B(X,X^{\left( n-1\right) })$
such that for $x_{1},\ldots ,x_{n-1},z\in X$,
\begin{equation*}
f\left( x_{1},\ldots ,x_{n-1},z\right) =\left( u_{f}\left( z\right) \right)
\left( x_{1},\ldots ,x_{n-1}\right) \text{.}
\end{equation*}%
Furthermore, $\left\Vert f\right\Vert _{n,1}=\left\Vert u_{f}\right\Vert _{%
\operatorname{op}}$.
\end{proposition}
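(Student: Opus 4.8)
The plan is to construct $u_f$ by "freezing" the last argument of $f$, and then to observe that the two norms are one and the same iterated supremum, read in different orders.

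First I would fix $z\in X$ and define $u_f(z)\colon X^{n-1}\to\mathbb{R}$ by $(u_f(z))(x_1,\dots,x_{n-1}):=f(x_1,\dots,x_{n-1},z)$. Since $f$ is multilinear, each of the partial functions obtained by varying one of $x_1,\dots,x_{n-1}$ while freezing the others (and $z$) is additive and homogeneous, so $u_f(z)$ is a multilinear $(n-1)$-functional on $X$; and if $K>0$ bounds $f$ on $(X,\|\cdot\|)$, then $|(u_f(z))(x_1,\dots,x_{n-1})|\le K\|x_1\|\cdots\|x_{n-1}\|\,\|z\|$, so $u_f(z)\in X^{(n-1)}$ with $\|u_f(z)\|_{n-1,1}\le K\|z\|$. (For $n=1$ this just says $u_f(z)=f(z)\in\mathbb{R}=X^{(0)}$, and the remaining steps degenerate to the trivial identification of a bounded linear functional with itself.)

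Next I would check that $z\mapsto u_f(z)$ is linear: the identities $u_f(z+z')=u_f(z)+u_f(z')$ and $u_f(\alpha z)=\alpha\,u_f(z)$ in $X^{(n-1)}$ follow by evaluating at an arbitrary $(x_1,\dots,x_{n-1})$ and using linearity of $f$ in its last argument. Together with the estimate $\|u_f(z)\|_{n-1,1}\le K\|z\|$ from the previous step, this shows $u_f\in B(X,X^{(n-1)})$ with $\|u_f\|_{\operatorname{op}}\le K$; and the displayed identity $f(x_1,\dots,x_{n-1},z)=(u_f(z))(x_1,\dots,x_{n-1})$ holds by the definition of $u_f$.

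Finally, for the norm equality I would unwind both sides. By definition,
\[
\|u_f\|_{\operatorname{op}}=\sup_{z\neq 0}\frac{\|u_f(z)\|_{n-1,1}}{\|z\|}=\sup_{z\neq 0}\ \sup_{x_1,\dots,x_{n-1}\neq 0}\frac{|f(x_1,\dots,x_{n-1},z)|}{\|x_1\|\cdots\|x_{n-1}\|\,\|z\|},
\]
and this iterated supremum is exactly $\sup_{x_1,\dots,x_{n-1},z\neq 0}\frac{|f(x_1,\dots,x_{n-1},z)|}{\|x_1\|\cdots\|x_{n-1}\|\,\|z\|}=\|f\|_{n,1}$. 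I do not expect a genuine obstacle here; the only points needing a little care are confirming that $u_f(z)$ really satisfies the axioms of a multilinear $(n-1)$-functional (in particular the homogeneity normalization survives the index shift) and the bookkeeping that passes between the single supremum defining $\|f\|_{n,1}$ and the nested supremum defining $\|u_f\|_{\operatorname{op}}$, with finiteness of both sides guaranteed by the boundedness of $f$.
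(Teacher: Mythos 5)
Your construction is exactly the paper's: freeze the last argument to obtain $f_z=u_f(z)\in X^{(n-1)}$, verify multilinearity and boundedness of $f_z$, then linearity and boundedness of $z\mapsto u_f(z)$, and finally identify the norms. The only cosmetic difference is that the paper proves $\left\Vert f\right\Vert _{n,1}=\left\Vert u_{f}\right\Vert _{\operatorname{op}}$ by establishing the two inequalities separately, whereas you note that both quantities are the same (iterated versus joint) supremum; your version is correct and the approach is essentially identical.
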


\begin{proof}
Take $z\in X$ and define an $\left( n-1\right) $-functional $f_{z}$ on $X$
with
\begin{equation*}
f_{z}\left( x_{1},\ldots ,x_{n-1}\right) :=f\left( x_{1},\ldots
,x_{n-1},z\right)
\end{equation*}%
for $x_{1},\ldots ,x_{n-1}\in X$. We show $f_{z}\in X^{\left( n-1\right) }$.
Note that for $x_{1},\ldots ,x_{n-1},y_{1},\ldots ,y_{n-1}\in X$ and $\alpha
_{1},\ldots ,\alpha _{n-1}\in \mathbb{R}$, we have
\begin{align*}
f_{z}\left( x_{1}+y_{1},\ldots ,x_{n-1}+y_{n-1}\right) & =f\left(
x_{1}+y_{1},\ldots ,x_{n-1}+y_{n-1},z\right) \\
& =\sum_{z_{i}\in \left\{ x_{i},y_{i}\right\} ,1\leq i\leq n-1}f\left(
z_{1},\ldots ,z_{n-1},z\right) \\
& =\sum_{z_{i}\in \left\{ x_{i},y_{i}\right\} ,1\leq i\leq n-1}f_{z}\left(
z_{1},\ldots ,z_{n-1}\right) \text{,}
\end{align*}%
\begin{align*}
f_{z}\left( \alpha _{1}x_{1},\ldots ,\alpha _{n-1}x_{n-1}\right) & =f\left(
\alpha _{1}x_{1},\ldots ,\alpha _{n-1}x_{n-1},z\right) \\
& =\alpha _{1}\cdots \alpha _{n-1}f\left( x_{1},\ldots ,x_{n-1},z\right) \\
& =\alpha _{1}\cdots \alpha _{n-1}f_{z}\left( x_{1},\ldots ,x_{n-1}\right)
\text{,}
\end{align*}%
and%
\begin{equation*}
\left\vert f_{z}\left( x_{1},\ldots ,x_{n-1}\right) \right\vert =\left\vert
f\left( x_{1},\ldots ,x_{n-1},z\right) \right\vert \leq \left\Vert
f\right\Vert _{n,1}\left\Vert z\right\Vert \left( \left\Vert
x_{1}\right\Vert \cdots \left\Vert x_{n-1}\right\Vert \right)
\end{equation*}%
since $f$ is bounded on $(X,\left\Vert \cdot \right\Vert )$. Hence $%
f_{z}:X^{n-1}\rightarrow \mathbb{R}$ is multilinear and bounded; and then $%
f_{z}\in X^{\left( n-1\right) }$.

Now define $u_{f}:X\rightarrow X^{\left( n-1\right) }$ with $u_{f}\left(
z\right) :=f_{z}$ for $z\in X$. We have to show $u_{f}\in B(X,X^{\left(
n-1\right) })$. First we show that $u_{f}$ is linear. Take $z_{1},z_{2}\in X$
and $\alpha ,\beta \in \mathbb{R}$. For every $x_{1},\ldots ,x_{n-1}\in X$,
we have%
\begin{align*}
\left( u_{f}\left( \alpha z_{1}+\beta z_{2}\right) \right) \left(
x_{1},\ldots ,x_{n-1}\right) & =f_{\alpha z_{1}+\beta z_{2}}\left(
x_{1},\ldots ,x_{n-1}\right) \\
& =f\left( x_{1},\ldots ,x_{n-1},\alpha z_{1}+\beta z_{2}\right) \\
& =f\left( x_{1},\ldots ,x_{n-1},\alpha z_{1}\right) +f\left( x_{1},\ldots
,x_{n-1},\beta z_{2}\right) \\
& =\alpha f\left( x_{1},\ldots ,x_{n-1},z_{1}\right) +\beta f\left(
x_{1},\ldots ,x_{n-1},z_{2}\right) \\
& =\alpha f_{z_{1}}\left( x_{1},\ldots ,x_{n-1}\right) +\beta
f_{z_{2}}\left( x_{1},\ldots ,x_{n-1}\right) \\
& =\left( \alpha u_{f}\left( z_{1}\right) \right) \left( x_{1},\ldots
,x_{n-1}\right) +\left( \beta u_{f}\left( z_{2}\right) \right) \left(
x_{1},\ldots ,x_{n-1}\right) \\
& =\left( \alpha u_{f}\left( z_{1}\right) +\beta u_{f}\left( z_{2}\right)
\right) \left( x_{1},\ldots ,x_{n-1}\right)
\end{align*}%
and%
\begin{equation*}
u_{f}\left( \alpha z_{1}+\beta z_{2}\right) =\alpha u_{f}\left( z_{1}\right)
+\beta u_{f}\left( z_{2}\right) \text{.}
\end{equation*}%
Hence $u_{f}$ is linear.

Next we show the boundedness of $u_{f}$. Take $z\in X$. Then for $%
x_{1},\ldots ,x_{n-1}\in X$, we have%
\begin{align*}
\left\vert \left( u_{f}\left( z\right) \right) \left( x_{1},\ldots
,x_{n-1}\right) \right\vert & =\left\vert f_{z}\left( x_{1},\ldots
,x_{n-1}\right) \right\vert =\left\vert f\left( x_{1},\ldots
,x_{n-1},z\right) \right\vert \\
& \leq \left\Vert f\right\Vert _{n,1}\left\Vert x_{1}\right\Vert \cdots
\left\Vert x_{n-1}\right\Vert \left\Vert z\right\Vert \text{ (}f\text{ is
bounded on }(X,\left\Vert \cdot \right\Vert )\text{)}
\end{align*}%
and then%
\begin{equation*}
\left\Vert u_{f}\left( z\right) \right\Vert =\sup_{x_{1},\ldots ,x_{n-1}\neq
0}\frac{\left\vert \left( u_{f}\left( z\right) \right) \left( x_{1},\ldots
,x_{n-1}\right) \right\vert }{\left\Vert x_{1}\right\Vert \cdots \left\Vert
x_{n-1}\right\Vert }\leq \left\Vert f\right\Vert _{n,1}\left\Vert
z\right\Vert
\end{equation*}%
which is finite. This implies%
\begin{equation*}
\sup_{z\neq 0}\frac{\left\Vert u_{f}\left( z\right) \right\Vert }{\left\Vert
z\right\Vert }\leq \left\Vert f\right\Vert _{n,1}
\end{equation*}%
which is finite. Therefore $u_{f}$ is bounded and $\left\Vert
u_{f}\right\Vert _{\operatorname{op}}\leq \left\Vert f\right\Vert _{n,1}$.

Finally we claim that $\left\Vert u_{f}\right\Vert _{\operatorname{op}}=\left\Vert
f\right\Vert _{n,1}$. Recall that we already have $\left\Vert
u_{f}\right\Vert _{\operatorname{op}}\leq \left\Vert f\right\Vert _{n,1}$. To show
the reverse inequality, note that for $z\in X$, $u_{f}\left( z\right) =f_{z}$
is bounded. Then for $x_{1},\ldots ,x_{n-1},z\in X$,%
\begin{align*}
\left\vert f\left( x_{1},\ldots ,x_{n-1},z\right) \right\vert & =\left\vert
f_{z}\left( x_{1},\ldots ,x_{n-1}\right) \right\vert \\
& \leq \left\Vert f_{z}\right\Vert _{n,1}\left\Vert x_{1}\right\Vert \cdots
\left\Vert x_{n-1}\right\Vert \text{ } \\
& \text{ \ \ \ \ \ (}f\text{ is bounded on }(X,\left\Vert \cdot \right\Vert )%
\text{)} \\
& =\left\Vert u_{f}\left( z\right) \right\Vert \left\Vert x_{1}\right\Vert
\cdots \left\Vert x_{n-1}\right\Vert \\
& \leq (\left\Vert u_{f}\right\Vert _{\operatorname{op}}\left\Vert z\right\Vert
)\left\Vert x_{1}\right\Vert \cdots \left\Vert x_{n-1}\right\Vert
\end{align*}%
since $u_{f}$ is bounded. Hence%
\begin{equation*}
\left\Vert f\right\Vert _{n,1}=\sup_{x_{1},\ldots ,x_{n-1},z\neq 0}\frac{%
\left\vert f\left( x_{1},\ldots ,x_{n-1},z\right) \right\vert }{\left\Vert
x_{1}\right\Vert \cdots \left\Vert x_{n-1}\right\Vert \left\Vert
z\right\Vert }\leq \left\Vert u_{f}\right\Vert _{\operatorname{op}}
\end{equation*}%
and $\left\Vert f\right\Vert _{n,1}\leq \left\Vert u_{f}\right\Vert _{\operatorname{%
op}}$. Therefore $\left\Vert u_{f}\right\Vert _{\operatorname{op}}=\left\Vert
f\right\Vert _{n,1}$, as claimed.
\end{proof}

\begin{theorem}
\label{n-dual-space-norm}Let $X$ be a real normed space of dimension $d\geq
n $. Then the $n$-dual space of $(X,\left\Vert \cdot \right\Vert )$ is $%
B(X,X^{\left( n-1\right) })$.
\end{theorem}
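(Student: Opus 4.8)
The plan is to upgrade the correspondence $f\mapsto u_{f}$ furnished by Proposition \ref{n-functional} to an isometric isomorphism between the $n$-dual space $X^{\left( n\right) }$ of $(X,\left\Vert \cdot \right\Vert )$ and $B(X,X^{\left( n-1\right) })$, so that the two normed spaces may be identified. Write $\Phi :X^{\left( n\right) }\rightarrow B(X,X^{\left( n-1\right) })$ for the map $\Phi \left( f\right) :=u_{f}$. Proposition \ref{n-functional} already gives that $\Phi $ is well defined with $\left\Vert \Phi \left( f\right) \right\Vert _{\operatorname{op}}=\left\Vert f\right\Vert _{n,1}$ for every $f\in X^{\left( n\right) }$; in particular $\Phi $ is isometric, hence injective. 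It therefore remains to check that $\Phi $ is linear and surjective.

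Linearity is immediate from the definition of $u_{f}$: for $f,h\in X^{\left( n\right) }$, $\alpha ,\beta \in \mathbb{R}$ and $x_{1},\ldots ,x_{n-1},z\in X$ we have
\begin{equation*}
\big( u_{\alpha f+\beta h}\left( z\right) \big) \left( x_{1},\ldots ,x_{n-1}\right) =\left( \alpha f+\beta h\right) \left( x_{1},\ldots ,x_{n-1},z\right) =\alpha \big( u_{f}\left( z\right) \big) \left( x_{1},\ldots ,x_{n-1}\right) +\beta \big( u_{h}\left( z\right) \big) \left( x_{1},\ldots ,x_{n-1}\right) ,
\end{equation*}
so $u_{\alpha f+\beta h}=\alpha u_{f}+\beta u_{h}$, i.e.\ $\Phi \left( \alpha f+\beta h\right) =\alpha \Phi \left( f\right) +\beta \Phi \left( h\right) $.

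For surjectivity, given $u\in B(X,X^{\left( n-1\right) })$ I would define $g:X^{n}\rightarrow \mathbb{R}$ by $g\left( x_{1},\ldots ,x_{n}\right) :=\big( u\left( x_{n}\right) \big) \left( x_{1},\ldots ,x_{n-1}\right) $ and show that $g\in X^{\left( n\right) }$ with $u_{g}=u$. Multilinearity of $g$ follows by combining the multilinearity of each value $u\left( x_{n}\right) \in X^{\left( n-1\right) }$ (which governs additivity and the scaling identity in the first $n-1$ slots) with the linearity of $u$ together with the pointwise vector space structure of $X^{\left( n-1\right) }$ (which governs the last slot); splicing these yields both the full $2^{n}$-term additive law and the scaling law. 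For boundedness, for all $x_{1},\ldots ,x_{n}\in X$,
\begin{equation*}
\left\vert g\left( x_{1},\ldots ,x_{n}\right) \right\vert =\left\vert \big( u\left( x_{n}\right) \big) \left( x_{1},\ldots ,x_{n-1}\right) \right\vert \leq \left\Vert u\left( x_{n}\right) \right\Vert _{n-1,1}\left\Vert x_{1}\right\Vert \cdots \left\Vert x_{n-1}\right\Vert \leq \left\Vert u\right\Vert _{\operatorname{op}}\left\Vert x_{1}\right\Vert \cdots \left\Vert x_{n}\right\Vert ,
\end{equation*}
so $g$ is bounded on $(X,\left\Vert \cdot \right\Vert )$ and hence $g\in X^{\left( n\right) }$. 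Finally, for all $x_{1},\ldots ,x_{n-1},z\in X$ one has $\big( u_{g}\left( z\right) \big) \left( x_{1},\ldots ,x_{n-1}\right) =g\left( x_{1},\ldots ,x_{n-1},z\right) =\big( u\left( z\right) \big) \left( x_{1},\ldots ,x_{n-1}\right) $, so $u_{g}\left( z\right) =u\left( z\right) $ for every $z\in X$, i.e.\ $\Phi \left( g\right) =u_{g}=u$. Thus $\Phi $ is onto.

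Putting these steps together, $\Phi $ is a linear isometric bijection, so the $n$-dual space of $(X,\left\Vert \cdot \right\Vert )$ is $B(X,X^{\left( n-1\right) })$. I do not anticipate a genuine obstacle here; the one step requiring a little care is confirming that the $n$-functional $g$ built from an arbitrary $u$ satisfies the multilinearity axioms exactly as stated, where one must splice the multilinearity of the functionals $u\left( x_{n}\right) $ in the first $n-1$ variables with the linearity of $u$ in the last variable.
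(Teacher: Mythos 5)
Your proposal is correct and follows essentially the same route as the paper: it uses Proposition \ref{n-functional} for well-definedness and isometry of $f\mapsto u_{f}$, and proves surjectivity by the same construction $g\left( x_{1},\ldots ,x_{n}\right) :=\left( u\left( x_{n}\right) \right) \left( x_{1},\ldots ,x_{n-1}\right) $ with the same multilinearity and boundedness checks. The only cosmetic difference is that you verify linearity of the identification map explicitly and deduce injectivity from the isometry, whereas the paper checks injectivity directly; both are fine.
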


\begin{proof}
For a bounded multilinear $n$-functional on $(X,\left\Vert \cdot \right\Vert
)$ $f$, let $u_{f}\in B(X,X^{\left( n-1\right) })$ be as in Proposition \ref%
{n-functional}. Define a map $\theta $ from the $n$-dual space of $%
(X,\left\Vert \cdot \right\Vert )$ to $B\left( X,X^{\left( n-1\right)
}\right) $ with%
\begin{equation*}
\theta \left( f\right) :=u_{f}
\end{equation*}%
for $f\in X^{\left( n\right) }$. We have to show that $\theta $ is isometric
and bijective.

The isometricness of $\theta $ follows from Proposition \ref{n-functional}.

Next we show the injectivity of $\theta $. Let $f,h$ be bounded multilinear $%
n$-functionals on $(X,\left\Vert \cdot \right\Vert )$ such that $\theta
\left( f\right) =\theta \left( h\right) $. Then $u_{f}=u_{h}$ and for every $%
x_{1},\ldots ,x_{n-1},x_{n}\in X$, we have
\begin{align*}
f\left( x_{1},\ldots ,x_{n-1},x_{n}\right) & =\left( u_{f}\left(
x_{n}\right) \right) \left( x_{1},\ldots ,x_{n-1}\right) \\
& =\left( u_{h}\left( x_{n}\right) \right) \left( x_{1},\ldots
,x_{n-1}\right) \\
& =h\left( x_{1},\ldots ,x_{n-1},x_{n}\right) \text{.}
\end{align*}%
Hence $f=h$ and $\theta $ is injective.

To show that $\theta $ is surjective, we take $u\in B\left( X,X^{\left(
n-1\right) }\right) $ and have to show that there exists a bounded
multilinear $n$-functional on $(X,\left\Vert \cdot \right\Vert )$ $f_{u}$
such that $\theta \left( f_{u}\right) =u$. Now we define $f_{u}$ an $n$%
-functional on $X$ where%
\begin{equation*}
f_{u}\left( x_{1},\ldots ,x_{n-1},x_{n}\right) :=\left( u\left( x_{n}\right)
\right) \left( x_{1},\ldots ,x_{n-1}\right)
\end{equation*}%
for $x_{1},\ldots ,x_{n-1},x_{n}\in X$. We claim that $f_{u}$ is multilinear
and bounded on $(X,\left\Vert \cdot \right\Vert )$.

First we show that $f_{u}$ is multilinear. Take $x_{1},\ldots
,x_{n},y_{1},\ldots ,y_{n}\in X$ and $\alpha _{1},\ldots ,\alpha _{n}\in
\mathbb{R}$. we have
\begin{align*}
f_{u}\left( x_{1}+y_{1},\ldots ,x_{n}+y_{n}\right) & =\left( u\left(
x_{n}+y_{n}\right) \right) \left( x_{1}+y_{1},\ldots ,x_{n-1}+y_{n-1}\right)
\\
& =\sum_{z_{i}\in \left\{ x_{i},y_{i}\right\} ,1\leq i\leq n-1}\left(
u\left( x_{n}+y_{n}\right) \right) \left( z_{1},\ldots ,z_{n-1}\right) \\
& =\sum_{z_{i}\in \left\{ x_{i},y_{i}\right\} ,1\leq i\leq n-1}\left(
u\left( x_{n}\right) +u\left( y_{n}\right) \right) \left( z_{1},\ldots
,z_{n-1}\right) \\
& =\sum_{z_{i}\in \left\{ x_{i},y_{i}\right\} ,1\leq i\leq n-1}\left(
f\left( z_{1},\ldots ,z_{n-1},x_{n}\right) +f\left( z_{1},\ldots
,z_{n-1},y_{n}\right) \right) \\
& =\sum_{z_{i}\in \left\{ x_{i},y_{i}\right\} ,1\leq i\leq n}f\left(
z_{1},\ldots ,z_{n-1},z_{n}\right)
\end{align*}%
and%
\begin{align*}
f_{u}\left( \alpha _{1}x_{1},\ldots ,\alpha _{n}x_{n}\right) & =\left(
u\left( \alpha _{n}x_{n}\right) \right) \left( \alpha _{1}x_{1},\ldots
,\alpha _{n-1}x_{n-1}\right) \\
& =\alpha _{1}\cdots \alpha _{n-1}\left( u\left( \alpha _{n}x_{n}\right)
\right) \left( x_{1},\ldots ,x_{n-1}\right) \text{ (}u\left( \alpha
_{n}x_{n}\right) \text{ is multilinear)} \\
& =\alpha _{1}\cdots \alpha _{n-1}\alpha _{n}\left( u\left( x_{n}\right)
\right) \left( x_{1},\ldots ,x_{n-1}\right) \text{ (}u\text{ is linear)} \\
& =\alpha _{1}\cdots \alpha _{n-1}\alpha _{n}f_{u}\left( x_{1},\ldots
,x_{n-1},x_{n}\right) \text{.}
\end{align*}%
Hence $f_{u}$ is multilinear.

Next we show that $f_{u}$ is bounded on $(X,\left\Vert \cdot \right\Vert )$.
Take $x_{1},\ldots ,x_{n}\in X$. Then%
\begin{align*}
\left\vert f_{u}\left( x_{1},\ldots ,x_{n-1},x_{n}\right) \right\vert &
=\left\vert \left( u\left( x_{n}\right) \right) \left( x_{1},\ldots
,x_{n-1}\right) \right\vert \\
& \leq \left\Vert u\left( x_{n}\right) \right\Vert \left\Vert
x_{1}\right\Vert \cdots \left\Vert x_{n-1}\right\Vert \text{ (}u\left(
x_{n}\right) \text{ is bounded)} \\
& \leq (\left\Vert u\right\Vert _{\operatorname{op}}\left\Vert x_{n}\right\Vert
)\left\Vert x_{1}\right\Vert \cdots \left\Vert x_{n-1}\right\Vert \text{ (}u%
\text{ is bounded)}
\end{align*}%
and $f_{u}$ is bounded.

Hence $f_{u}$ is multilinear and bounded on $(X,\left\Vert \cdot \right\Vert
)$, as claimed. Note that $\theta \left( f_{u}\right) =u_{f_{u}}$. Take $%
x_{1},\ldots ,x_{n}\in X$ and we have%
\begin{equation*}
\left( u\left( x_{n}\right) \right) \left( x_{1},\ldots ,x_{n-1}\right)
=f_{u}\left( x_{1},\ldots ,x_{n-1},x_{n}\right) =\left( \left(
u_{f_{u}}\right) \left( x_{n}\right) \right) \left( x_{1},\ldots
,x_{n-1}\right) \text{.}
\end{equation*}%
Then $u\left( x_{n}\right) =u_{f_{u}}\left( x_{n}\right) $ for $x_{n}\in X$,
and
\begin{equation*}
u=u_{f_{u}}=\theta \left( f_{u}\right) \text{.}
\end{equation*}
Therefore, $\theta $ is surjective and a bijection, as required.
\end{proof}

Recall from \cite[Theorem 2.10-2]{KRE} that for normed spaces $X,Y$, the
normed space $B\left( X,Y\right) $ is a Banach space if $Y$ is a Banach
space. Since $\mathbb{R}$ is a Banach space, then for every normed space $X$%
, $X^{\left( 1\right) }$ is also a Banach space. Hence Theorem \ref%
{n-dual-space-norm} with $n=2$ implies that $X^{\left( 2\right) }$ is also a
Banach space. Therefore, by induction and Theorem \ref{n-dual-space-norm},
we get the following theorem.

\begin{theorem}
\label{banach-space-n-dual-space}Let $X$ be a real normed space of dimension
$d\geq n$. Then the $n$-dual space of $(X,\left\Vert \cdot \right\Vert )$ is
a Banach space.
\end{theorem}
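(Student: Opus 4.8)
The plan is to argue by induction on $n$, using Theorem \ref{n-dual-space-norm} to reduce completeness of the $n$-dual space of $(X,\left\Vert \cdot \right\Vert )$ to completeness of the $(n-1)$-dual space, together with the standard fact \cite[Theorem 2.10-2]{KRE} that $B(X,Y)$ is a Banach space whenever the target space $Y$ is a Banach space. For the base of the induction I would take $n=1$ (the case $n=0$ being immediate, since $X^{\left( 0\right) }=\mathbb{R}$): by definition $X^{\left( 1\right) }$ is the space of bounded linear functionals on $X$ with the operator norm, i.e.\ $X^{\left( 1\right) }=B(X,\mathbb{R})$, and since $\mathbb{R}$ is a Banach space the cited result gives that $X^{\left( 1\right) }$ is a Banach space.

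For the inductive step, suppose $n\geq 2$ and that the $(n-1)$-dual space $X^{\left( n-1\right) }$ is a Banach space. Since $\dim X=d\geq n>n-1$, the inductive hypothesis does apply to $X^{\left( n-1\right) }$. By Theorem \ref{n-dual-space-norm} (whose proof, via Proposition \ref{n-functional}, produces the map $\theta\colon f\mapsto u_f$) there is an isometric bijection from the $n$-dual space of $(X,\left\Vert \cdot \right\Vert )$ onto $B(X,X^{\left( n-1\right) })$. Because $X^{\left( n-1\right) }$ is complete, $B(X,X^{\left( n-1\right) })$ is a Banach space by \cite[Theorem 2.10-2]{KRE}; and since $\theta$ is a surjective linear isometry, it carries Cauchy sequences to Cauchy sequences in both directions, so completeness transfers back to $X^{\left( n\right) }$. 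This closes the induction and proves the theorem.

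I do not expect a genuine obstacle here: all the analytic content — that a bounded multilinear $n$-functional corresponds to a bounded operator $X\to X^{\left( n-1\right) }$ with the \emph{same} norm — was already extracted in Proposition \ref{n-functional} and Theorem \ref{n-dual-space-norm}. The one point that must not be glossed over is precisely that the correspondence $\theta$ is isometric rather than merely algebraic; this is the equality $\left\Vert u_f\right\Vert _{\operatorname{op}}=\left\Vert f\right\Vert _{n,1}$ recorded in Proposition \ref{n-functional}, and it is what guarantees that the pullback of the limit of a Cauchy sequence in $B(X,X^{\left( n-1\right) })$ is itself bounded and is the limit of the original sequence in $X^{\left( n\right) }$.
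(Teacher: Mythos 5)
Your proposal is correct and follows essentially the same route as the paper: induction on $n$, with the base case $X^{(1)}=B(X,\mathbb{R})$ complete because $\mathbb{R}$ is, and the inductive step combining \cite[Theorem 2.10-2]{KRE} with the isometric identification of $X^{(n)}$ with $B(X,X^{(n-1)})$ from Theorem \ref{n-dual-space-norm}. Your explicit remark that the isometry (not just the bijection) is what transfers completeness is exactly the point the paper relies on implicitly.
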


\section{The $n$-dual space of $(X,\left\Vert \cdot ,\cdots ,\cdot
\right\Vert _{G})$}

\label{dual-space-n-norm}

In this section, we focus on normed spaces of dimension $d\geq n$ which
satisfy property (G). On this space, we investigate the relationship between
bounded multilinear $n$-functionals on $(X,\left\Vert \cdot ,\cdots ,\cdot
\right\Vert _{G})$ and bounded multilinear $n$-functionals on $(X,\left\Vert
\cdot \right\Vert )$ (Lemma \ref{equivalence}). We then use it to determine
the $n$-dual space of $(X,\left\Vert \cdot ,\cdots ,\cdot \right\Vert _{G})$
(Theorem \ref{n-dual-space-n-norm}) and show that the space is a Banach
space (Theorem \ref{banach-space-n-dual-space-Gahler}).

First we recall the functional $g$ and property (G) introduced by Mili\v{c}i%
\'{c} in \cite{M93}. The functional $g:X^{2}\rightarrow \mathbb{R}$ is
defined by%
\begin{equation*}
g\left( x,y\right) :=\frac{\left\Vert x\right\Vert }{2}\left( \tau
_{-}\left( x,y\right) +\tau _{+}\left( x,y\right) \right)
\end{equation*}%
where%
\begin{equation*}
\tau _{\pm }\left( x,y\right) :=\lim_{t\rightarrow \pm 0}t^{-1}\left(
\left\Vert x+ty\right\Vert -\left\Vert x\right\Vert \right) \text{.}
\end{equation*}%
The functional $g$ satisfies the following properties: for all $x,y\in X$
and $\alpha ,\beta \in \mathbb{R}$

\begin{enumerate}
\item[(G1)] $g\left( x,x\right) =\left\Vert x\right\Vert ^{2}$;

\item[(G2)] $g\left( \alpha x,\beta y\right) =\alpha \beta g\left(
x,y\right) $;

\item[(G3)] $g\left( x,x+y\right) =\left\Vert x\right\Vert ^{2}+g\left(
x,y\right) $; and

\item[(G4)] $\left\vert g\left( x,y\right) \right\vert \leq \left\Vert
x\right\Vert \left\Vert y\right\Vert $.
\end{enumerate}

We say that a real normed space $X$ satisfies \emph{property (G)} if the
functional $g\left( x,y\right) $ is linear with respect to $y\in X$. In that
case, we then call $g$ a \emph{semi-inner product }on $X$. For example, for $%
1\leq p<\infty $, the $l^{p}$ space satisifes property (G)\ (see \cite{WG13}%
).

By using the semi-inner product $g$, we define an orthogonal relation on $X$
as follows:%
\begin{equation*}
x\perp _{g}y\Leftrightarrow g\left( x,y\right) =0\text{.}
\end{equation*}

Let $x\in X$ and $Y=\left\{ y_{1},\ldots ,y_{n}\right\} \subseteq X$. We
write $\Gamma \left( y_{1},\ldots ,y_{n}\right) $ to denote the Gram
determinant $\det \left[ g\left( y_{i},y_{j}\right) \right] _{i,j}$. If $%
\Gamma \left( y_{1},\ldots ,y_{n}\right) \neq 0$, then the vector%
\begin{equation*}
x_{Y}:=-\frac{1}{\Gamma \left( y_{1},\ldots ,y_{n}\right) }\det \left[
\begin{array}{cccc}
0 & y_{1} & \cdots & y_{n} \\
g\left( y_{1},x\right) & g\left( y_{1},y_{1}\right) & \cdots & g\left(
y_{1},y_{n}\right) \\
\vdots & \vdots &  & \vdots \\
g\left( y_{n},x\right) & g\left( y_{n},y_{1}\right) & \cdots & g\left(
y_{n},y_{n}\right)%
\end{array}%
\right]
\end{equation*}%
is called the Gram-Schimdt projection of the vector $x$ on $Y$.

Next let $\left\{ x_{1},\ldots ,x_{n}\right\} $ be a linearly independent
set of vectors in $X$. As in \cite{M93}, we call $x_{1}^{\circ },\ldots
,x_{n}^{\circ }$ the \emph{left }$g$\emph{-orthogonal sequence} where $%
x_{1}^{\circ }:=x_{1}$ and for $i=2,\ldots ,n$,%
\begin{equation*}
x_{i}^{\circ }:=x_{i}-\left( x_{i}\right) _{S_{i-1}}\text{,}
\end{equation*}%
where $S_{i-1}:=\operatorname{span}\left\{ x_{1},\ldots ,x_{i-1}\right\} $. Note
that if $i<j$, then $x_{i}^{\circ }\perp _{g}x_{j}^{\circ }$ and $%
g(x_{i}^{\circ },x_{j}^{\circ })=0$.

\begin{proposition}
\label{n-norm-inequality}Let $X$ be a real normed space of dimension $d\geq
n $ which satisfies property (G). Let $\left\{ x_{1},\ldots ,x_{n}\right\} $
be a linearly independent set of vectors in $X$. Then%
\begin{equation*}
\left\Vert x_{1}^{\circ }\right\Vert \cdots \left\Vert x_{n}^{\circ
}\right\Vert \leq \left\Vert x_{1},\ldots ,x_{n}\right\Vert _{G}\leq
n!\left\Vert x_{1}\right\Vert \cdots \left\Vert x_{n}\right\Vert \text{.}
\end{equation*}
\end{proposition}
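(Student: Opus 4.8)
The proposition asks for two inequalities: the upper bound $\left\Vert x_{1},\ldots ,x_{n}\right\Vert _{G}\leq n!\left\Vert x_{1}\right\Vert \cdots \left\Vert x_{n}\right\Vert$ and the lower bound $\left\Vert x_{1}^{\circ }\right\Vert \cdots \left\Vert x_{n}^{\circ }\right\Vert \leq \left\Vert x_{1},\ldots ,x_{n}\right\Vert _{G}$. I would treat them separately. For the upper bound, I expand the determinant $\det [f_j(x_i)]_{i,j}$ appearing in the definition of the Gähler $n$-norm as a sum over $S_n$, getting $\left\vert \det[f_j(x_i)]\right\vert \leq \sum_{\sigma\in S_n}\prod_i |f_{\sigma(i)}(x_i)| \leq \sum_{\sigma\in S_n}\prod_i \|f_{\sigma(i)}\|\,\|x_i\| \leq n!\prod_i\|x_i\|$, using $\|f_i\|\leq 1$; taking the supremum over the $f_i$ gives the claim. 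This step is routine.

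**The lower bound.** This is the substantive part. The idea is to construct, for each $i$, a norming functional for the $g$-orthogonal vector $x_i^{\circ}$ that vanishes on all the other $x_j^{\circ}$. Concretely, fix $i$ and let $f_i\in X^{(1)}$ be a support functional at $x_i^{\circ}$, so $\|f_i\|=1$ and $f_i(x_i^{\circ})=\|x_i^{\circ}\|$. The key observation I would exploit is that the semi-inner product $g$ gives such support functionals: because $X$ satisfies property (G), for a nonzero vector $v$ the functional $y\mapsto g(v,y)/\|v\|$ is linear, has norm $1$ by (G4) and (G1), and sends $v$ to $\|v\|$. So I would take $f_i(y) := g(x_i^{\circ},y)/\|x_i^{\circ}\|$. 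Then for $j>i$ we have $f_i(x_j^{\circ}) = g(x_i^{\circ},x_j^{\circ})/\|x_i^{\circ}\| = 0$ by the orthogonality noted just before the proposition; so the matrix $[f_j(x_i^{\circ})]_{i,j}$ — equivalently $[f_i(x_j^{\circ})]$ after transposing — is triangular with diagonal entries $\|x_i^{\circ}\|$, hence $|\det[f_j(x_i^{\circ})]| = \prod_i\|x_i^{\circ}\|$.

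**Passing from the $x_i^{\circ}$ back to the $x_i$.** Since each $x_i^{\circ}$ differs from $x_i$ only by an element of $\operatorname{span}\{x_1,\ldots,x_{i-1}\}$, the change-of-basis matrix from $(x_1,\ldots,x_n)$ to $(x_1^{\circ},\ldots,x_n^{\circ})$ is unitriangular, so it has determinant $1$. Therefore, for \emph{any} choice of functionals $h_1,\ldots,h_n$, $\det[h_j(x_i)]_{i,j} = \det[h_j(x_i^{\circ})]_{i,j}$. Applying this with $h_j=f_j$ the functionals built above gives $\left\vert \det[f_j(x_i)]_{i,j}\right\vert = \prod_i\|x_i^{\circ}\|$, and since each $\|f_j\|\leq 1$, this quantity is a lower bound for the supremum defining $\left\Vert x_1,\ldots,x_n\right\Vert_G$. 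Combining the two bounds finishes the proof.

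**Anticipated obstacle.** The only delicate point is making sure the functional $f_i(y)=g(x_i^{\circ},y)/\|x_i^{\circ}\|$ is genuinely linear and bounded by $1$ — this is exactly where property (G) and properties (G1)–(G4) are used, and one should state it cleanly (perhaps as the remark that under property (G) the map $y\mapsto g(v,y)$ is a bounded linear functional of norm $\|v\|$ for $v\neq0$). I would also want to double-check the bookkeeping on which index of the determinant is summed and the triangularity direction (it is $f_i(x_j^{\circ})=0$ for $j>i$, not $j<i$), but once the functionals are in hand the determinant computations are immediate, and the unitriangular change-of-basis argument is standard linear algebra.
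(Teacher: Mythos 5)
Your proposal is correct and follows essentially the same route as the paper: the Leibniz-formula estimate for the upper bound, and for the lower bound the unit functionals $y\mapsto g(x_i^{\circ},y)/\lVert x_i^{\circ}\rVert$ (linear by property (G), norm at most $1$ by (G4)) inserted into the supremum, with the triangular structure $g(x_i^{\circ},x_j^{\circ})=0$ for $i<j$ giving $\prod_i\lVert x_i^{\circ}\rVert$. The only cosmetic difference is that you justify the passage from the $x_i$ to the $x_i^{\circ}$ by the unitriangular change of basis applied directly to $\det[f_j(x_i)]$, whereas the paper invokes the same fact in the form $\lVert x_1,\ldots,x_n\rVert_G=\lVert x_1^{\circ},\ldots,x_n^{\circ}\rVert_G$ without further comment.
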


\begin{proof}
First we show the right inequality. Note that%
\begin{align*}
\left\Vert x_{1},\ldots ,x_{n}\right\Vert _{G}& =\sup_{\substack{ f_{i}\in
X^{\left( 1\right) },\left\Vert f_{i}\right\Vert \leq 1  \\ 1\leq i\leq n}}%
\left\vert \det \left[ f_{j}\left( x_{i}\right) \right] _{i,j}\right\vert \\
& =\sup_{\substack{ f_{i}\in X^{\left( 1\right) },\left\Vert
f_{i}\right\Vert \leq 1  \\ 1\leq i\leq n}}\left\vert \sum_{\sigma \in S_{n}}%
\operatorname{sgn}\left( \sigma \right) \prod_{i=1}^{n}f_{\sigma \left( i\right)
}\left( x_{i}\right) \right\vert \text{ (by the Leibniz formula)} \\
& \leq \sup_{\substack{ f_{i}\in X^{\left( 1\right) },\left\Vert
f_{i}\right\Vert \leq 1  \\ 1\leq i\leq n}}\sum_{\sigma \in S_{n}}\left\vert
\prod_{i=1}^{n}f_{\sigma \left( i\right) }\left( x_{i}\right) \right\vert
\text{ (by the triangle inequality)} \\
& \leq \sup_{\substack{ f_{i}\in X^{\left( 1\right) },\left\Vert
f_{i}\right\Vert \leq 1  \\ 1\leq i\leq n}}\sum_{\sigma \in S_{n}}\Big(%
\prod_{i=1}^{n}\left\Vert f_{\sigma \left( i\right) }\right\Vert \left\Vert
x_{i}\right\Vert \Big)\text{ (each }f_{i}\text{ is bounded)} \\
& \leq \sum_{\sigma \in S_{n}}\Big(\prod_{i=1}^{n}\left\Vert
x_{i}\right\Vert \Big) \\
& =n!\left\Vert x_{1}\right\Vert \cdots \left\Vert x_{n}\right\Vert \text{,}
\end{align*}%
as required.

To show the left inequality, we first show that for a fixed $x\in X$, the
functional $g_{x}$ on $X$ defined by
\begin{equation*}
g_{x}\left( y\right) :=\frac{g\left( x,y\right) }{\left\Vert x\right\Vert }
\end{equation*}%
for $y\in X$, is bounded and linear. The linearity follows since $X$
satisfies property (G). Now take $y\in X$, by (G4), we have%
\begin{equation*}
\left\vert g_{x}\left( y\right) \right\vert =\left\vert \frac{g\left(
x,y\right) }{\left\Vert x\right\Vert }\right\vert \leq \left\Vert
y\right\Vert
\end{equation*}%
and $g_{x}$ is bounded, as required. Hence for $x\in X$, $g_{x}\in X^{\left(
1\right) }$. Furthermore, $\left\Vert g_{x}\right\Vert \leq 1$.

Now note that $\left\Vert x_{1},\ldots ,x_{n}\right\Vert _{G}=\left\Vert
x_{1}^{\circ },\ldots ,x_{n}^{\circ }\right\Vert _{G}$. This implies%
\begin{align}
\left\Vert x_{1},\ldots ,x_{n}\right\Vert _{G}& =\left\Vert x_{1}^{\circ
},\ldots ,x_{n}^{\circ }\right\Vert _{G}=\sup_{\substack{ f_{i}\in X^{\left(
1\right) },\left\Vert f_{i}\right\Vert \leq 1  \\ 1\leq i\leq n}}\left\vert
\det \left[ f_{j}\left( x_{i}^{\circ }\right) \right] _{i,j}\right\vert
\label{ineq} \\
& \geq \left\vert \det [g_{x_{j}^{\circ }}(x_{i}^{\circ })]_{i,j}\right\vert
=\frac{1}{\left\Vert x_{1}^{\circ }\right\Vert \cdots \left\Vert
x_{n}^{\circ }\right\Vert }\left\vert \det \left[ g(x_{j}^{\circ
},x_{i}^{\circ })\right] _{i,j}\right\vert \text{.}  \notag
\end{align}%
Since $x_{1}^{\circ },\ldots ,x_{n}^{\circ }$ is the left $g$-orhogonal
sequence, then $g(x_{i}^{\circ },x_{j}^{\circ })=0$ if $i<j$. By (G1), we
get $g(x_{i}^{\circ },x_{i}^{\circ })=\left\Vert x_{i}^{\circ }\right\Vert
^{2}$ for $i=1,\ldots ,n$. This implies
\begin{equation*}
\left\vert \det \left[ g(x_{j}^{\circ },x_{i}^{\circ })\right]
_{i,j}\right\vert =\left\Vert x_{1}^{\circ }\right\Vert ^{2}\cdots
\left\Vert x_{n}^{\circ }\right\Vert ^{2}
\end{equation*}%
and \eqref{ineq} become%
\begin{equation*}
\left\Vert x_{1},\ldots ,x_{n}\right\Vert _{G}\geq \left\Vert x_{1}^{\circ
}\right\Vert \cdots \left\Vert x_{n}^{\circ }\right\Vert \text{,}
\end{equation*}%
as required.
\end{proof}

\begin{remark}
Proposition \ref{n-norm-inequality} is a generalisation of Theorem 2.2 in
\cite{WG13}. In \cite[Theorem 2.2]{WG13}, Wibawa-Kusumah and Gunawan only
proved Proposition \ref{n-norm-inequality} for $l^{p}$ spaces where $1\leq
p<\infty $.
\end{remark}

\begin{lemma}
\label{equivalence}Let $X$ be a real normed space of dimension $d\geq n$
which satisfies property (G). Let $f$ be a multilinear $n$-functional on $X$%
. Then $f$ is antisymmetric and bounded on $(X,\left\Vert \cdot \right\Vert
) $ if and only if $f$ is bounded on $(X,\left\Vert \cdot ,\cdots ,\cdot
\right\Vert _{G})$. Furthermore%
\begin{equation*}
\left\Vert f\right\Vert _{n,n}\leq \left\Vert f\right\Vert _{n,1}\leq
n!\left\Vert f\right\Vert _{n,n}\text{.}
\end{equation*}
\end{lemma}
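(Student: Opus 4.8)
The plan is to prove the two implications separately, using Proposition \ref{n-norm-inequality} as the bridge, and to read off the two norm inequalities along the way.

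First I would treat the ``if'' direction. Suppose $f$ is bounded on $(X,\left\Vert \cdot ,\cdots ,\cdot \right\Vert _{G})$. By the fact recalled in Section \ref{preliminaries}, $f$ is then automatically antisymmetric, and hence vanishes on every linearly dependent tuple; in particular $f$ is antisymmetric as required. To see $f$ is bounded on $(X,\left\Vert \cdot \right\Vert )$, fix $x_{1},\ldots ,x_{n}\in X$. If they are linearly dependent, then $f(x_{1},\ldots ,x_{n})=0$ and there is nothing to check. If they are linearly independent, the right-hand inequality of Proposition \ref{n-norm-inequality} gives
\begin{equation*}
\left\vert f(x_{1},\ldots ,x_{n})\right\vert \leq \left\Vert f\right\Vert _{n,n}\left\Vert x_{1},\ldots ,x_{n}\right\Vert _{G}\leq n!\left\Vert f\right\Vert _{n,n}\left\Vert x_{1}\right\Vert \cdots \left\Vert x_{n}\right\Vert \text{.}
\end{equation*}
Hence $f$ is bounded on $(X,\left\Vert \cdot \right\Vert )$ and $\left\Vert f\right\Vert _{n,1}\leq n!\left\Vert f\right\Vert _{n,n}$.

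Conversely, suppose $f$ is antisymmetric and bounded on $(X,\left\Vert \cdot \right\Vert )$. Fix $x_{1},\ldots ,x_{n}\in X$. If they are linearly dependent, then $\left\Vert x_{1},\ldots ,x_{n}\right\Vert _{G}=0$ while $f(x_{1},\ldots ,x_{n})=0$ by antisymmetry, so again there is nothing to check. Assume they are linearly independent and let $x_{1}^{\circ },\ldots ,x_{n}^{\circ }$ be the left $g$-orthogonal sequence. The key observation is that
\begin{equation*}
f(x_{1},\ldots ,x_{n})=f(x_{1}^{\circ },\ldots ,x_{n}^{\circ })\text{,}
\end{equation*}
i.e.\ passing to the left $g$-orthogonal sequence is an ``elementary-column-operation''-type move that an alternating multilinear functional does not see. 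To prove it I would replace the arguments one at a time, from $i=n$ down to $i=2$ (noting $x_{1}^{\circ }=x_{1}$): writing $x_{i}=x_{i}^{\circ }+w_{i}$ with $w_{i}=(x_{i})_{S_{i-1}}\in S_{i-1}=\operatorname{span}\left\{ x_{1},\ldots ,x_{i-1}\right\} $, multilinearity splits $f(x_{1},\ldots ,x_{i-1},x_{i},x_{i+1}^{\circ },\ldots ,x_{n}^{\circ })$ into $f(x_{1},\ldots ,x_{i-1},x_{i}^{\circ },x_{i+1}^{\circ },\ldots ,x_{n}^{\circ })$ plus a term whose $i$-th slot holds $w_{i}\in \operatorname{span}\left\{ x_{1},\ldots ,x_{i-1}\right\} $; the latter $n$-tuple is linearly dependent, so that term vanishes by antisymmetry. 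Iterating gives the identity. Then the left-hand inequality of Proposition \ref{n-norm-inequality}, applied to the (nonzero) vectors $x_{1}^{\circ },\ldots ,x_{n}^{\circ }$, yields
\begin{equation*}
\left\vert f(x_{1},\ldots ,x_{n})\right\vert =\left\vert f(x_{1}^{\circ },\ldots ,x_{n}^{\circ })\right\vert \leq \left\Vert f\right\Vert _{n,1}\left\Vert x_{1}^{\circ }\right\Vert \cdots \left\Vert x_{n}^{\circ }\right\Vert \leq \left\Vert f\right\Vert _{n,1}\left\Vert x_{1},\ldots ,x_{n}\right\Vert _{G}\text{.}
\end{equation*}
Hence $f$ is bounded on $(X,\left\Vert \cdot ,\cdots ,\cdot \right\Vert _{G})$ and $\left\Vert f\right\Vert _{n,n}\leq \left\Vert f\right\Vert _{n,1}$.

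Combining the two implications establishes the equivalence; once that is known, both estimates hold for the same $f$, giving the stated sandwich $\left\Vert f\right\Vert _{n,n}\leq \left\Vert f\right\Vert _{n,1}\leq n!\left\Vert f\right\Vert _{n,n}$. The only genuinely delicate point is the identity $f(x_{1},\ldots ,x_{n})=f(x_{1}^{\circ },\ldots ,x_{n}^{\circ })$: I would be careful to justify the span equality $S_{i-1}=\operatorname{span}\left\{ x_{1},\ldots ,x_{i-1}\right\} $ and to run the replacement in the correct order, so that at each step the ``offending'' argument $w_{i}$ lies in the span of the as-yet-untouched earlier arguments $x_{1},\ldots ,x_{i-1}$, which is what forces linear dependence and kills the extra term. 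Everything else is routine bookkeeping with the boundedness estimates and the two halves of Proposition \ref{n-norm-inequality}.
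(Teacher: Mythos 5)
Your proof is correct and follows essentially the same route as the paper: one direction uses the right-hand inequality of Proposition \ref{n-norm-inequality} together with the automatic antisymmetry of functionals bounded on $(X,\left\Vert \cdot ,\ldots ,\cdot \right\Vert _{G})$, the other uses the identity $f\left( x_{1},\ldots ,x_{n}\right) =f\left( x_{1}^{\circ },\ldots ,x_{n}^{\circ }\right) $ and the left-hand inequality. The only difference is that you supply details the paper leaves implicit, namely the column-operation argument proving that identity and the explicit handling of linearly dependent tuples.
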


\begin{proof}
First suppose that $f$ is antisymmetric bounded on $(X,\left\Vert \cdot
\right\Vert )$. Take linearly independent $x_{1},\ldots ,x_{n}\in X$. Then
\begin{equation*}
f\left( x_{1},\ldots ,x_{n}\right) =f\left( x_{1}^{\circ },\ldots
,x_{n}^{\circ }\right)
\end{equation*}%
and by the left inequality in Proposition \ref{n-norm-inequality},%
\begin{eqnarray*}
\frac{\left\vert f\left( x_{1},\ldots ,x_{n}\right) \right\vert }{\left\Vert
x_{1},\ldots ,x_{n}\right\Vert _{G}} &\leq &\frac{\left\vert f\left(
x_{1},\ldots ,x_{n}\right) \right\vert }{\left\Vert x_{1}^{\circ
}\right\Vert \cdots \left\Vert x_{n}^{\circ }\right\Vert }=\frac{\left\vert
f\left( x_{1}^{\circ },\ldots ,x_{n}^{\circ }\right) \right\vert }{%
\left\Vert x_{1}^{\circ }\right\Vert \cdots \left\Vert x_{n}^{\circ
}\right\Vert } \\
&\leq &\left\Vert f\right\Vert _{n,1}\text{ (}f\text{ is bounded on }%
(X,\left\Vert \cdot \right\Vert )\text{)}
\end{eqnarray*}%
which is finite. Hence $f$ is bounded on $(X,\left\Vert \cdot ,\cdots ,\cdot
\right\Vert _{G})$ and%
\begin{equation}
\left\Vert f\right\Vert _{n,n}\leq \left\Vert f\right\Vert _{n,1}\text{.}
\label{Eq1}
\end{equation}

Next suppose that $f$ is bounded on $(X,\left\Vert \cdot ,\cdots ,\cdot
\right\Vert _{G})$. Then $f$ is antisymmetric. To show the boundedness of $f$
on $(X,\left\Vert \cdot \right\Vert )$, we take linearly independent $%
x_{1},\ldots ,x_{n}\in X$. Then by the right inequality in Proposition \ref%
{n-norm-inequality},%
\begin{eqnarray*}
\frac{\left\vert f\left( x_{1},\ldots ,x_{n}\right) \right\vert }{\left\Vert
x_{1}\right\Vert \cdots \left\Vert x_{n}\right\Vert } &\leq &n!\frac{%
\left\vert f\left( x_{1},\ldots ,x_{n}\right) \right\vert }{\left\Vert
x_{1},\ldots ,x_{n}\right\Vert _{G}} \\
&\leq &n!\left\Vert f\right\Vert _{n,n}\text{ (}f\text{ is bounded on }%
(X,\left\Vert \cdot ,\cdots ,\cdot \right\Vert _{G})\text{)}
\end{eqnarray*}%
which is finite. Hence $f$ is bounded on $(X,\left\Vert \cdot \right\Vert )$
and%
\begin{equation}
\left\Vert f\right\Vert _{n,1}\leq n!\left\Vert f\right\Vert _{n,n}\text{.}
\label{Eq2}
\end{equation}

Finally, by \eqref{Eq1} and \eqref{Eq2}, we get%
\begin{equation*}
\left\Vert f\right\Vert _{n,n}\leq \left\Vert f\right\Vert _{n,1}\leq
n!\left\Vert f\right\Vert _{n,n}\text{,}
\end{equation*}%
as required.
\end{proof}

Now we say $u\in B(X,X^{\left( n-1\right) })$ \emph{antisymmetric} if for $%
x_{1},\ldots ,x_{n}\in X$ and $\sigma \in S_{n}$,%
\begin{equation*}
\left( u\left( x_{n}\right) \right) \left( x_{1},\ldots ,x_{n-1}\right) =%
\operatorname{sgn}\left( \sigma \right) \left( u\left( x_{\sigma \left( n\right)
}\right) \right) \left( x_{\sigma \left( 1\right) },\ldots ,x_{\sigma \left(
n-1\right) }\right)
\end{equation*}%
and then define $B_{\operatorname{as}}(X,X^{\left( n-1\right) })$ as the collection
of antisymmetric elements of $B(X,X^{\left( n-1\right) })$. Note that $B_{%
\operatorname{as}}(X,X^{\left( n-1\right) })$ is also a normed space with the norm
inherited from $B(X,X^{\left( n-1\right) })$ which is $\left\Vert \cdot
\right\Vert _{\operatorname{op}}$.

Note that Theorem \ref{n-dual-space-norm} and Lemma \ref{equivalence} imply
that every bounded multilinear $n$-functional on $(X,\left\Vert \cdot
,\cdots ,\cdot \right\Vert _{G})$ can be identified as an element of $B_{%
\operatorname{as}}(X,X^{\left( n-1\right) })$ and vice versa. Therefore Lemma \ref%
{equivalence} implies the following corollary and theorem.

\begin{corollary}
\label{equivalent-norm}Let $X$ be a real normed space of dimension $d\geq n$
which satisfies property (G). The function $\left\Vert \cdot \right\Vert
_{G} $ on $B_{\operatorname{as}}(X,X^{\left( n-1\right) })$ where%
\begin{equation*}
\left\Vert u\right\Vert _{G}:=\sup_{\left\Vert x_{1},\ldots
,x_{n}\right\Vert _{G}\neq 0}\frac{\left\vert \left( u\left( x_{n}\right)
\right) \left( x_{1},\ldots ,x_{n-1}\right) \right\vert }{\left\Vert
x_{1},\ldots ,x_{n}\right\Vert _{G}}
\end{equation*}%
for $u\in B(X,X^{\left( n-1\right) })$, defines a norm on $B_{\operatorname{as}%
}(X,X^{\left( n-1\right) })$. Furthermore, $\left\Vert \cdot \right\Vert
_{G} $ and $\left\Vert \cdot \right\Vert _{\operatorname{op}}$ are equivalent norms
on $B_{\operatorname{as}}(X,X^{\left( n-1\right) })$ with%
\begin{equation*}
\left\Vert u\right\Vert _{G}\leq \left\Vert u\right\Vert _{\operatorname{op}}\leq
n!\left\Vert u\right\Vert _{G}
\end{equation*}%
for $u\in B(X,X^{\left( n-1\right) })$.
\end{corollary}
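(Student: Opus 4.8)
The plan is to transport the norm $\|\cdot\|_{n,n}$ on the $n$-dual space of $(X,\|\cdot,\ldots,\cdot\|_G)$ over to $B_{\operatorname{as}}(X,X^{(n-1)})$ through the restriction of the isometry $\theta\colon f\mapsto u_f$ from Theorem \ref{n-dual-space-norm}, and then read off the equivalence constants from Lemma \ref{equivalence} and Proposition \ref{n-functional}. First I would pin down which functionals correspond to $B_{\operatorname{as}}(X,X^{(n-1)})$. Writing $f(x_1,\ldots,x_{n-1},x_n)=(u_f(x_n))(x_1,\ldots,x_{n-1})$, the antisymmetry identity $f(x_1,\ldots,x_n)=\operatorname{sgn}(\sigma)f(x_{\sigma(1)},\ldots,x_{\sigma(n)})$ is word for word the defining relation of $B_{\operatorname{as}}(X,X^{(n-1)})$, so $f$ is antisymmetric if and only if $u_f\in B_{\operatorname{as}}(X,X^{(n-1)})$. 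By Lemma \ref{equivalence}, the antisymmetric bounded multilinear $n$-functionals on $(X,\|\cdot\|)$ are exactly the bounded multilinear $n$-functionals on $(X,\|\cdot,\ldots,\cdot\|_G)$, that is, exactly the $n$-dual space of $(X,\|\cdot,\ldots,\cdot\|_G)$. Hence $\theta$ restricts to a (clearly linear) bijection from the $n$-dual space of $(X,\|\cdot,\ldots,\cdot\|_G)$ onto $B_{\operatorname{as}}(X,X^{(n-1)})$, with inverse $u\mapsto f_u$, $f_u(x_1,\ldots,x_{n-1},x_n):=(u(x_n))(x_1,\ldots,x_{n-1})$.

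Next I would compare the two norm formulas. Since $\|x_1,\ldots,x_n\|_G\neq 0$ precisely when $x_1,\ldots,x_n$ are linearly independent, for $u\in B_{\operatorname{as}}(X,X^{(n-1)})$ and $f_u$ as above,
\begin{equation*}
\|u\|_G=\sup_{\|x_1,\ldots,x_n\|_G\neq0}\frac{|(u(x_n))(x_1,\ldots,x_{n-1})|}{\|x_1,\ldots,x_n\|_G}=\sup_{\|x_1,\ldots,x_n\|_G\neq0}\frac{|f_u(x_1,\ldots,x_n)|}{\|x_1,\ldots,x_n\|_G}=\|f_u\|_{n,n},
\end{equation*}
which is finite because $f_u$ is bounded on $(X,\|\cdot,\ldots,\cdot\|_G)$ by Lemma \ref{equivalence}. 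As $\|\cdot\|_{n,n}$ is a norm on the $n$-dual space of $(X,\|\cdot,\ldots,\cdot\|_G)$ and $u\mapsto f_u$ is a linear bijection onto it, $\|\cdot\|_G$ inherits all the norm axioms on $B_{\operatorname{as}}(X,X^{(n-1)})$. (One could equally check them directly: absolute homogeneity and the triangle inequality are immediate from the supremum, and if $\|u\|_G=0$ then $(u(x_n))(x_1,\ldots,x_{n-1})=0$ for every linearly independent tuple, while antisymmetry of $u$ forces it to vanish on linearly dependent tuples too, hence $u=0$.)

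Finally, for the equivalence I would fix $u\in B_{\operatorname{as}}(X,X^{(n-1)})$ and put $f=f_u$. Then $\|u\|_{\operatorname{op}}=\|f\|_{n,1}$ by Proposition \ref{n-functional}, $\|u\|_G=\|f\|_{n,n}$ by the displayed computation, and $\|f\|_{n,n}\le\|f\|_{n,1}\le n!\,\|f\|_{n,n}$ by Lemma \ref{equivalence}; chaining these gives $\|u\|_G\le\|u\|_{\operatorname{op}}\le n!\,\|u\|_G$. No new estimate is required: all the analytic content sits in Proposition \ref{n-norm-inequality}, already packaged inside Lemma \ref{equivalence}. The only point demanding attention is the bookkeeping — verifying that the restriction of $\theta$ lands exactly on $B_{\operatorname{as}}(X,X^{(n-1)})$ and that the suprema defining $\|\cdot\|_{n,n}$ and $\|\cdot\|_G$ range over the same tuples — and I expect that to be the main (though minor) obstacle.
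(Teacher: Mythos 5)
Your proposal is correct and follows essentially the same route the paper takes: the paper derives the corollary by combining Theorem \ref{n-dual-space-norm} and Lemma \ref{equivalence} to identify bounded multilinear $n$-functionals on $(X,\left\Vert \cdot ,\ldots ,\cdot \right\Vert _{G})$ with elements of $B_{\operatorname{as}}(X,X^{\left( n-1\right) })$, and your argument simply makes this identification and the chaining of $\left\Vert u\right\Vert _{\operatorname{op}}=\left\Vert f_{u}\right\Vert _{n,1}$ with $\left\Vert f_{u}\right\Vert _{n,n}\leq \left\Vert f_{u}\right\Vert _{n,1}\leq n!\left\Vert f_{u}\right\Vert _{n,n}$ explicit. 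Your added details (the correspondence of antisymmetry under $\theta$, the finiteness of $\left\Vert u\right\Vert _{G}$, and the definiteness check using vanishing on linearly dependent tuples) are all sound.
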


\begin{theorem}
\label{n-dual-space-n-norm}Let $X$ be a real normed space of dimension $%
d\geq n$ which satisfies property (G). Then the $n$-dual space of $%
(X,\left\Vert \cdot ,\cdots ,\cdot \right\Vert _{G})$ is $\left( B_{\operatorname{as}%
}(X,X^{\left( n-1\right) }),\left\Vert \cdot \right\Vert _{G}\right) $.
\end{theorem}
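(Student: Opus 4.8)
The plan is to obtain Theorem \ref{n-dual-space-n-norm} by restricting the isometric bijection $\theta$ of Theorem \ref{n-dual-space-norm} to an appropriate subspace. By Lemma \ref{equivalence}, a multilinear $n$-functional $f$ on $X$ belongs to the $n$-dual space of $(X,\left\Vert \cdot ,\cdots ,\cdot \right\Vert _{G})$ if and only if $f$ is antisymmetric and bounded on $(X,\left\Vert \cdot \right\Vert )$; write $\mathcal{A}$ for the resulting linear subspace of the $n$-dual space $X^{(n)}$ of $(X,\left\Vert \cdot \right\Vert )$. Thus the underlying set of the $n$-dual space of $(X,\left\Vert \cdot ,\cdots ,\cdot \right\Vert _{G})$ is exactly $\mathcal{A}$, and I would show that $\theta$ carries $\mathcal{A}$ isometrically onto $\left( B_{\operatorname{as}}(X,X^{(n-1)}),\left\Vert \cdot \right\Vert _{G}\right) $.

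First I would check that $\theta(\mathcal{A})=B_{\operatorname{as}}(X,X^{(n-1)})$. If $f\in\mathcal{A}$, then for $x_{1},\ldots,x_{n}\in X$ and $\sigma\in S_{n}$, using $f(x_{1},\ldots,x_{n-1},z)=(u_{f}(z))(x_{1},\ldots,x_{n-1})$ from Proposition \ref{n-functional} together with the antisymmetry of $f$ gives
\[
(u_{f}(x_{n}))(x_{1},\ldots,x_{n-1}) = f(x_{1},\ldots,x_{n}) = \operatorname{sgn}(\sigma)\, f(x_{\sigma(1)},\ldots,x_{\sigma(n)}) = \operatorname{sgn}(\sigma)\,(u_{f}(x_{\sigma(n)}))(x_{\sigma(1)},\ldots,x_{\sigma(n-1)}),
\]
so $u_{f}=\theta(f)\in B_{\operatorname{as}}(X,X^{(n-1)})$. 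Conversely, given $u\in B_{\operatorname{as}}(X,X^{(n-1)})$, let $f_{u}$ be the bounded multilinear $n$-functional on $(X,\left\Vert \cdot \right\Vert )$ constructed in the proof of Theorem \ref{n-dual-space-norm}, so that $\theta(f_{u})=u$; reading the displayed identity in reverse shows $f_{u}$ is antisymmetric, hence $f_{u}\in\mathcal{A}$ by Lemma \ref{equivalence}. Since $\theta$ is already injective on all of $X^{(n)}$ by Theorem \ref{n-dual-space-norm}, its restriction to $\mathcal{A}$ is a bijection onto $B_{\operatorname{as}}(X,X^{(n-1)})$.

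It remains to match the norms. For $f\in\mathcal{A}$ with $u=\theta(f)=u_{f}$, the definitions give directly
\[
\left\Vert u\right\Vert _{G} = \sup_{\left\Vert x_{1},\ldots,x_{n}\right\Vert _{G}\neq 0}\frac{\left\vert (u(x_{n}))(x_{1},\ldots,x_{n-1})\right\vert }{\left\Vert x_{1},\ldots,x_{n}\right\Vert _{G}} = \sup_{\left\Vert x_{1},\ldots,x_{n}\right\Vert _{G}\neq 0}\frac{\left\vert f(x_{1},\ldots,x_{n})\right\vert }{\left\Vert x_{1},\ldots,x_{n}\right\Vert _{G}} = \left\Vert f\right\Vert _{n,n},
\]
and, Corollary \ref{equivalent-norm} having already established that $\left\Vert \cdot \right\Vert _{G}$ is a norm on $B_{\operatorname{as}}(X,X^{(n-1)})$, the restricted map $\theta|_{\mathcal{A}}$ is an isometric isomorphism from the $n$-dual space of $(X,\left\Vert \cdot ,\cdots ,\cdot \right\Vert _{G})$ onto $\left( B_{\operatorname{as}}(X,X^{(n-1)}),\left\Vert \cdot \right\Vert _{G}\right) $, as claimed. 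The only point requiring genuine care, though still routine, is the antisymmetry transfer $f\in\mathcal{A}\Leftrightarrow u_{f}\in B_{\operatorname{as}}(X,X^{(n-1)})$: one must keep track of which slot of $f$ plays the role of the outer argument $x_{n}$ of $u_{f}$ and carry the sign $\operatorname{sgn}(\sigma)$ correctly through the identification of Proposition \ref{n-functional}; everything else is an unwinding of definitions on top of Theorem \ref{n-dual-space-norm}, Lemma \ref{equivalence}, and Corollary \ref{equivalent-norm}.
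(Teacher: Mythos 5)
Your proposal is correct and follows essentially the same route as the paper, which derives Theorem \ref{n-dual-space-n-norm} directly from Theorem \ref{n-dual-space-norm} together with Lemma \ref{equivalence} (the paper merely states this identification without writing out the details). Your write-up simply makes explicit the antisymmetry transfer $f\mapsto u_{f}$ and the norm identity $\left\Vert u_{f}\right\Vert _{G}=\left\Vert f\right\Vert _{n,n}$ that the paper leaves implicit.
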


The rest of this section is devoted to show that for $n\in \mathbb{N}$, the $%
n$-dual space of $(X,\left\Vert \cdot ,\cdots ,\cdot \right\Vert _{G})$ is a
Banach space.

\begin{theorem}
\label{banach-space-Bss}Let $X$ be a real normed space of dimension $d\geq n$
which satisfies property (G). Then $B_{\operatorname{as}}(X,X^{\left( n-1\right) })$
is a Banach space.
\end{theorem}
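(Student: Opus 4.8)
The plan is to exhibit $B_{\operatorname{as}}(X,X^{(n-1)})$ as a closed linear subspace of the Banach space $B(X,X^{(n-1)})$; since a closed subspace of a Banach space is itself complete, this yields the assertion.

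First I would note that $B(X,X^{(n-1)})$ is a Banach space. If $n=1$, this space is $B(X,\mathbb{R})=X^{(1)}$, which is Banach. If $n\geq 2$, then $d\geq n\geq n-1$, so Theorem \ref{banach-space-n-dual-space} applied to the $(n-1)$-dual space shows $X^{(n-1)}$ is a Banach space, and hence $B(X,X^{(n-1)})$ is a Banach space by \cite[Theorem 2.10-2]{KRE}. Next, $B_{\operatorname{as}}(X,X^{(n-1)})$ is a linear subspace of $B(X,X^{(n-1)})$: the defining antisymmetry condition is linear in $u$, so if $u,v$ are antisymmetric and $\alpha,\beta\in\mathbb{R}$, applying the antisymmetry of $u$ and of $v$ to the two summands of $\left((\alpha u+\beta v)(x_n)\right)(x_1,\ldots,x_{n-1})$ shows $\alpha u+\beta v$ is antisymmetric.

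The main step is to prove that $B_{\operatorname{as}}(X,X^{(n-1)})$ is closed in $\left(B(X,X^{(n-1)}),\left\Vert\cdot\right\Vert_{\operatorname{op}}\right)$. Let $(u_k)$ be a sequence in $B_{\operatorname{as}}(X,X^{(n-1)})$ with $u_k\to u$ in $\left\Vert\cdot\right\Vert_{\operatorname{op}}$, and fix $x_1,\ldots,x_n\in X$. From
\begin{equation*}
\left\vert\left(u_k\left(x_n\right)\right)\left(x_1,\ldots,x_{n-1}\right)-\left(u\left(x_n\right)\right)\left(x_1,\ldots,x_{n-1}\right)\right\vert\leq\left\Vert u_k-u\right\Vert_{\operatorname{op}}\left\Vert x_1\right\Vert\cdots\left\Vert x_n\right\Vert
\end{equation*}
we get $\left(u_k\left(x_n\right)\right)\left(x_1,\ldots,x_{n-1}\right)\to\left(u\left(x_n\right)\right)\left(x_1,\ldots,x_{n-1}\right)$, and likewise with the arguments permuted by any $\sigma\in S_n$. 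Since each $u_k$ is antisymmetric, $\left(u_k\left(x_n\right)\right)\left(x_1,\ldots,x_{n-1}\right)=\operatorname{sgn}\left(\sigma\right)\left(u_k\left(x_{\sigma(n)}\right)\right)\left(x_{\sigma(1)},\ldots,x_{\sigma(n-1)}\right)$; letting $k\to\infty$ on both sides gives $\left(u\left(x_n\right)\right)\left(x_1,\ldots,x_{n-1}\right)=\operatorname{sgn}\left(\sigma\right)\left(u\left(x_{\sigma(n)}\right)\right)\left(x_{\sigma(1)},\ldots,x_{\sigma(n-1)}\right)$. As $\sigma$ and $x_1,\ldots,x_n$ were arbitrary, $u\in B_{\operatorname{as}}(X,X^{(n-1)})$, so the subspace is closed and therefore complete.

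I do not expect a genuine obstacle here: the only point requiring care is that convergence in the operator norm forces pointwise convergence of the scalars $\left(u_k\left(x_n\right)\right)\left(x_1,\ldots,x_{n-1}\right)$, which is precisely what permits passing to the limit inside the antisymmetry identity; the remainder is bookkeeping together with the standard fact that a closed subspace of a Banach space is Banach.
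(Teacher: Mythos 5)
Your proposal is correct and follows essentially the same route as the paper: both realise $B_{\operatorname{as}}(X,X^{(n-1)})$ as a closed subspace of the Banach space $B(X,X^{(n-1)})$ (the latter via Theorem \ref{banach-space-n-dual-space}), deduce pointwise convergence of the scalars $\left(u_k\left(x_n\right)\right)\left(x_1,\ldots,x_{n-1}\right)$ from operator-norm convergence, and pass to the limit in the antisymmetry identity. Your explicit check that the subspace is linear and your remark on the $n=1$ case are harmless additions not spelled out in the paper.
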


\begin{proof}
Since every closed subspace of Banach space is also a Banach space, then by
Theorem \ref{banach-space-n-dual-space}, it suffices to show that $B_{\operatorname{%
as}}(X,X^{\left( n-1\right) })$ is a closed subspace of $B(X,X^{\left(
n-1\right) })$.

Take a sequence $\left\{ u_{m}\right\} \subseteq B_{\operatorname{as}}(X,X^{\left(
n-1\right) })$ such that $u_{m}\rightarrow u$. We have to show $u\in B_{%
\operatorname{as}}(X,X^{\left( n-1\right) })$. In other words, for $x_{1},\ldots
,x_{n}\in X$ and $\sigma \in S_{n}$, we have to show%
\begin{equation*}
\left( u\left( x_{n}\right) \right) \left( x_{1},\ldots ,x_{n-1}\right) =%
\operatorname{sgn}\left( \sigma \right) \left( u\left( x_{\sigma \left( n\right)
}\right) \right) \left( x_{\sigma \left( 1\right) },\ldots ,x_{\sigma \left(
n-1\right) }\right) \text{.}
\end{equation*}%
Take $x_{1},\ldots ,x_{n}\in X$ and $\sigma \in S_{n}$. First note that for $%
m\in \mathbb{N}$, we have
\begin{equation}
\left\Vert u\left( x_{n}\right) -u_{m}\left( x_{n}\right) \right\Vert
=\left\Vert \left( u-u_{m}\right) \left( x_{n}\right) \right\Vert \leq
\left\Vert u-u_{m}\right\Vert _{\operatorname{op}}\left\Vert x_{n}\right\Vert
\label{Eq3}
\end{equation}%
since $u-u_{m}$ is bounded. Since $u\left( x_{n}\right) ,u_{m}\left(
x_{n}\right) \in X^{\left( n-1\right) }$, then $\left( u-u_{m}\right) \left(
x_{n}\right) $ is bounded and for $y_{1},\ldots ,y_{n-1}\in X$, we have%
\begin{equation}
\left\Vert \left( \left( u-u_{m}\right) \left( x_{n}\right) \right) \left(
y_{1},\ldots ,y_{n-1}\right) \right\Vert \leq \left\Vert u\left(
x_{n}\right) -u_{m}\left( x_{n}\right) \right\Vert \left\Vert
y_{1}\right\Vert \cdots \left\Vert y_{n-1}\right\Vert \text{.}  \label{Eq4}
\end{equation}%
Since $u_{m}\rightarrow u$, then by \eqref{Eq3} and \eqref{Eq4}, we get%
\begin{equation}
\left( u_{m}\left( x_{n}\right) \right) \left( y_{1},\ldots ,y_{n-1}\right)
\rightarrow \left( u\left( x_{n}\right) \right) \left( y_{1},\ldots
,y_{n-1}\right)  \label{Eq5}
\end{equation}%
for $y_{1},\ldots ,y_{n-1}\in X$. Since $u_{m}$ is antisymmetric for every $%
m\in \mathbb{N}$, then \eqref{Eq5} implies%
\begin{equation*}
\left( u\left( x_{n}\right) \right) \left( x_{1},\ldots ,x_{n-1}\right) =%
\operatorname{sgn}\left( \sigma \right) \left( u\left( x_{\sigma \left( n\right)
}\right) \right) \left( x_{\sigma \left( 1\right) },\ldots ,x_{\sigma \left(
n-1\right) }\right) \text{,}
\end{equation*}%
as required. Thus $B_{\operatorname{as}}(X,X^{\left( n-1\right) })$ is closed and
then a Banach space.
\end{proof}

Furthermore, since $\left\Vert \cdot \right\Vert _{G}$ and $\left\Vert \cdot
\right\Vert _{\operatorname{op}}$ are equivalent norms on $B_{\operatorname{as}}(X,X^{\left(
n-1\right) })$, then by Theorem \ref{n-dual-space-n-norm} and Theorem \ref%
{banach-space-Bss}, we get the following theorem.

\begin{theorem}
\label{banach-space-n-dual-space-Gahler}Let $X$ be a real normed space of
dimension $d\geq n$ which satisfies property (G). Then the $n$-dual space of
$(X,\left\Vert \cdot ,\cdots ,\cdot \right\Vert _{G})$ is a Banach space.
\end{theorem}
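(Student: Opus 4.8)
The plan is to assemble the three results established above: Theorem~\ref{n-dual-space-n-norm}, which identifies the $n$-dual space of $(X,\left\Vert \cdot ,\cdots ,\cdot \right\Vert _{G})$ with $\left( B_{\operatorname{as}}(X,X^{\left( n-1\right) }),\left\Vert \cdot \right\Vert _{G}\right)$; Theorem~\ref{banach-space-Bss}, which says that $B_{\operatorname{as}}(X,X^{\left( n-1\right) })$ equipped with the operator norm $\left\Vert \cdot \right\Vert _{\operatorname{op}}$ is a Banach space; and Corollary~\ref{equivalent-norm}, which says that $\left\Vert \cdot \right\Vert _{G}$ and $\left\Vert \cdot \right\Vert _{\operatorname{op}}$ are equivalent norms on $B_{\operatorname{as}}(X,X^{\left( n-1\right) })$. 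The missing ingredient is the elementary fact that completeness is invariant under passing to an equivalent norm.

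First I would record that invariance explicitly. Suppose $\left\Vert u\right\Vert _{G}\leq \left\Vert u\right\Vert _{\operatorname{op}}\leq n!\left\Vert u\right\Vert _{G}$ for all $u\in B_{\operatorname{as}}(X,X^{\left( n-1\right) })$, as in Corollary~\ref{equivalent-norm}. If $\left\{ u_{m}\right\}$ is a $\left\Vert \cdot \right\Vert _{G}$-Cauchy sequence, then $\left\Vert u_{m}-u_{k}\right\Vert _{\operatorname{op}}\leq n!\left\Vert u_{m}-u_{k}\right\Vert _{G}$ shows it is $\left\Vert \cdot \right\Vert _{\operatorname{op}}$-Cauchy, so by Theorem~\ref{banach-space-Bss} there is $u\in B_{\operatorname{as}}(X,X^{\left( n-1\right) })$ with $\left\Vert u_{m}-u\right\Vert _{\operatorname{op}}\rightarrow 0$; and then $\left\Vert u_{m}-u\right\Vert _{G}\leq \left\Vert u_{m}-u\right\Vert _{\operatorname{op}}\rightarrow 0$, so $u_{m}\rightarrow u$ in the $\left\Vert \cdot \right\Vert _{G}$-norm. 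Hence $\left( B_{\operatorname{as}}(X,X^{\left( n-1\right) }),\left\Vert \cdot \right\Vert _{G}\right)$ is complete.

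Finally, I would invoke Theorem~\ref{n-dual-space-n-norm} to transfer this conclusion: the $n$-dual space of $(X,\left\Vert \cdot ,\cdots ,\cdot \right\Vert _{G})$ \emph{is} $\left( B_{\operatorname{as}}(X,X^{\left( n-1\right) }),\left\Vert \cdot \right\Vert _{G}\right)$ as a normed space (the isometric identification being the map sending a bounded multilinear $n$-functional $f$ on $(X,\left\Vert \cdot ,\cdots ,\cdot \right\Vert _{G})$ to the associated $u_{f}$ of Proposition~\ref{n-functional}), and a normed space isometric to a Banach space is a Banach space. This yields the claim.

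I do not expect any genuine obstacle here: every nontrivial estimate has already been carried out in Proposition~\ref{n-norm-inequality}, Lemma~\ref{equivalence}, Corollary~\ref{equivalent-norm}, and Theorem~\ref{banach-space-Bss}. The only point requiring any care is making sure the identification in Theorem~\ref{n-dual-space-n-norm} is used as an \emph{isometric} isomorphism of normed spaces (so that completeness genuinely transfers), rather than merely a linear bijection; this is exactly what the definition of $\left\Vert \cdot \right\Vert _{G}$ on $B_{\operatorname{as}}(X,X^{\left( n-1\right) })$ in Corollary~\ref{equivalent-norm} was set up to guarantee.
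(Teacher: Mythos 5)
Your proposal is correct and follows essentially the same route as the paper: the paper also deduces the theorem directly from Theorem~\ref{n-dual-space-n-norm}, Theorem~\ref{banach-space-Bss}, and the equivalence of $\left\Vert \cdot \right\Vert _{G}$ and $\left\Vert \cdot \right\Vert _{\operatorname{op}}$ from Corollary~\ref{equivalent-norm}. You merely spell out the standard fact that completeness is preserved under equivalent norms and under the isometric identification, which the paper leaves implicit.
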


\bibliographystyle{amsplain}

\end{document}